\newtheorem{thm}{Theorem}[section]
\newtheorem{lmm}[thm]{Lemma}
\newtheorem{prp}[thm]{Proposition}
\theoremstyle{definition}
\theoremstyle{remark}
\renewcommand{\a}{\alpha}
\renewcommand{\b}{\beta}
\newcommand{\de}{\delta}
\newcommand{\ga}{\gamma}
\newcommand{\Om}{\Omega}
\newcommand{\lan}{\langle}
\newcommand{\ran}{\rangle}
\def\A{{\mathcal{A}}}
\def\R{{\mathbb{R}}}
\def\E{{\mathbb{E}}}
\def\N{{\mathbb{N}}}
\def\M{{\mathcal{M}}}
\def\cP{{\mathcal{P}}}
\def\cE{{\mathcal{E}}}
\def\HDE{{\rm (HDE)}_\theta}
\title[Hydrostatic limit for exclusion process]{Hydrostatic limit for exclusion process with slow boundary revisited}
\author{\textsc{Kenkichi Tsunoda}}
\address{Department of Mathematics, Osaka University,
Osaka, 560-0043, Japan.\newline e-mail: \texttt{k-tsunoda@math.sci.osaka-u.ac.jp}.}
\subjclass[2010]{Primary 60K35, secondary 82C22}
\keywords{\textit{Exclusion process with slow boundary, Hydrodynamic limit, Hydrostatic limit}}
\begin{document}

\maketitle

\begin{abstract}

We revisit in this short article the hydrostatic limit for the exclusion process with slow boundary.
The original proof of this result relies on estimates of the correlation functions.
We achieve the same result based on analysis of two different time scales,
which do not need any information about the correlation functions.

\end{abstract}

\section{Introduction}\label{sec1}
We study in this article the limiting behavior of the empirical measure under
the stationary state, called {\it hydrostatic limit}, for the exclusion process with slow boundary.
This model has been introduced in R. Baldasso, O. Menezes, A. Neumann and R. R. Souza \cite{bmns17},
and can be described as follows. Let $N\in\N$ be a scaling parameter.
Each particle in the bulk $\{1,\dots, N-1\}$ behaves as an independent simple random walk with exclusive constraints.
The terminology {\it slow boundary} means that particles are created or annihilated at the boundary,
at a rate proportional to $N^{-\theta}$ for some $\theta\ge0$.
It has been established in Baldasso et al. \cite{bmns17} that
the following phase transition occurs: the boundary condition of the hydrodynamic equation
is governed by Dirichlet boundary if $\theta<1$, Robin boundary if $\theta=1$ and Neumann boundary if $\theta>1$, respectively.
We omit to introduce more detailed description and a historical background of this model here,
see Baldasso et al. \cite{bmns17} and references therein.

The purpose of this article is to introduce another proof of the hydrostatic limit,
stated in Theorem \ref{11}.
It is worth mentioning that our proof does not use any information about the correlation functions,
while the original one strongly relies on estimates of the correlation functions.
Although our proof can be applied to other particle systems, we concentrate on the exclusion process with slow boundary
in this article to make the presentation simplest.

The original method we follow in this article has been introduced in  Farfan, Landim and Mourragui \cite{flm11},
to show the hydrostatic limit for the boundary gradient driven symmetric exclusion process.
This method has been generalized to the case of the reaction-diffusion model in Landim and Tsunoda \cite{lt18}.
In fact, Landim and Tsunoda's method is robust enough to imply Theorem \ref{11} for $\theta\le1$, see Section \ref{sec3}.
However, the result established in Section \ref{sec3} is not enough to deduce Theorem \ref{11} for $\theta>1$.
This issue will be examined in the first paragraph of Section \ref{sec4}.
To complete the proof of Theorem \ref{11}, we further develop Landim and Tsunoda's method in the case $\theta>1$, where
the boundary condition of the hydrodynamic equation is governed by Neumann boundary conditions.
The method developed in Section \ref{sec4}, which is a main contribution of this article,
seems somewhat new and may be of interest in other contexts.

We remark on several papers related to this work, but only on papers published after Baldasso et al. \cite{bmns17}.
The main motivation of this work is based on recent developments on stationary nonequilibrium states.
See Bertini et al \cite{bdgjl15} for this subject. 
The equilibrium and non-equilibrium fluctuations for the exclusion process with slow boundary
and related models have been investigated
in a series of studies by T. Franco, P. Gon\c{c}alves and A. Neumann and their collaborators: \cite{fgn18, efgnt18, gjnm18}.
The large deviation for the exclusion process with a slow bond is examined in T. Franco and A. Neumann \cite{fn17}.

This article is organized as follows. In Section \ref{sec2}, we introduce the exclusion process with slow boundary precisely.
We also examine results on the hydrodynamic and hydrostatic limit, established in Baldasso et al. \cite{bmns17},
in Subsections \ref{2.2}, \ref{2.3}, respectively. The original proof of the hydrostatic limit is also examined in Subsection \ref{2.3}.
In Sections \ref{sec3}, \ref{sec4}, we study the diffusive time scale or a certain sub-diffusive time scale,
and deduce Theorem \ref{11} for $\theta\le1$ and for $\theta>1$, respectively.

\section{Model and main result}\label{sec2}
We introduce in this section the exclusion process with slow boundary
and state the hydrodynamic and hydrostatic limit for this particle system.
We constantly refer the reader to Baldasso et al. \cite{bmns17}
as most of the statements in this section borrow from the ones of Baldasso et al. \cite{bmns17}.

\subsection{Exclusion process with slow boundary}\label{2.1}

For each $N\in\N$, let $I_N$ be the one-dimensional discrete interval $\{1,\dots,N-1\}$.
Elements of $I_N$ are represented by the letters $x,y$ and $z$, while  an element of
the continuum interval $[0,1]$ is represented by the letter $u$.
Denote the {\it configuration space} by $\Om_N=\{0,1\}^{I_N}$,
and its element, called {\it configuration}, by $\eta=\{\eta(x):x\in I_N\}$.
For each $x\in I_N$, $\eta(x)$ represents the number of particles
sitting at site $x$, in other words, $\eta(x)=1$ if there is a particle at site $x$,
$\eta(x)=0$ otherwise.
For a configuration $\eta\in\Om_N$, let $\eta^{x,y}$ and $\eta^x$ be the configurations
obtained from $\eta$ by exchanging the occupation variables $\eta(x)$ and $\eta(y)$, 
by flipping the occupation variable $\eta(x)$, respectively:
\begin{align*}
\eta^{x,y}(z) \;=\; \begin{cases}
     \eta(y)  & \text{if $z=x$}\;, \\
     \eta(x)  & \text{if $z=y$}\;, \\
     \eta(z)    & \text{otherwise}\;,
\end{cases}
\quad
\eta^x(z) \;=\; \begin{cases}
     1-\eta(x)  & \text{if $z = x$}\;, \\
     \eta(z)  & \text{if $z\neq x$}\;. 
\end{cases}
\end{align*}

We introduce the exclusion process with slow boundary, which is a Markov process on $\Om_N$
whose generator is given by
\begin{align*}
L_N\;=\; L_{N,0} + L_{N,b}^\a + L_{N,b}^\b\;,
\end{align*}
with some fixed $\a, \b\in(0,1)$.
In the previous formula, $L_{N,0}$ stands for the generator of the symmetric simple exclusion process in $I_N$,
 that is, $L_{N,0}$ acts on functions $f:\Om_N\to\R$ as
\begin{align*}
L_{N,0}f(\eta)\;=\; \sum_{x=1}^{N-2} \left[ f(\eta^{x,x+1}) -f(\eta) \right]\;.
\end{align*}
On the other hand, $L_{N,b}^\a$ and $L_{N,b}^\b$ correspond to the dynamics at the left and right boundary, respectively,
which act functions $f:\Om_N\to\R$ as
\begin{align*}
L_{N,b}^\a f(\eta)\;&=\;cN^{-\theta}r_\a(\eta)\left[ f(\eta^1)-f(\eta)\right]\;,\\
L_{N,b}^\b f(\eta)\;&=\;cN^{-\theta}r_\b(\eta)\left[ f(\eta^{N-1})-f(\eta)\right]\;,
\end{align*}
where
\begin{align*}
r_\a(\eta)\;&=\; \a\left[ 1-\eta(1) \right] +( 1-\a)\eta(1)\;, \\
r_\b(\eta)\;&=\; \b\left[1-\eta(N-1)\right] +(1-\b)\eta(N-1)\;,
\end{align*}
with some fixed $c>0$ and $\theta\ge0$.

Denote  by $\nu_\rho^N$ the product Bernoulli measure on $\Om_N$ with density $\rho\in[0,1]$.
It is well known that $\nu_\rho^N$ is symmetric with respect to $L_{N,0}$ for any $\rho\in[0,1]$.
Since $r_\a$ and $r_\b$ are chosen to satisfy the detailed balance conditions with respect to $\nu_\a^N$ and $\nu_\b^N$,
these measures are symmetric with respect to $L_{N,b}^\a$ and $L_{N,b}^\b$, respectively.
However, it is also well known that the Bernoulli measures are not invariant with respect to $L_N$ unless $\a=\b$.
Since the cardinality of the state space $\Om_N$ is finite and the Markov process corresponding to $L_N$ is irreducible,
there exists a unique stationary state, denoted by $\mu_N^{ss}$, which is invariant under the dynamics.

\subsection{Hydrodynamic limit}\label{2.2}
It has been investigated in Baldasso et al. \cite{bmns17} that the boundary condition of
the hydrodynamic equation depends on the parameter $\theta$.
More precisely, the hydrodynamic behavior of the exclusion process with slow boundary is described as follows.
Assume for a while that the macroscopic density at time $0$ is given by
a measurable function $\rho_0:[0,1]\to[0,1]$.
For any $\theta\ge0$, the system in the bulk evolves according to the heat equation in $(0,1)$:
\begin{align*}
\begin{cases}
\partial_t\rho(t,u)\;=\; \partial_u^2\rho(t,u)\;,\\
\rho(0,u)\;=\;\rho_0(u)\;,
\end{cases}
\end{align*}
where $\rho(t,u)$ stands for the macroscopic density at time $t\ge0$ and position $u\in[0,1]$.
In the case $\theta<1$, the boundary condition is governed by Dirichlet boundary conditions:
\begin{align*}
\begin{cases}
\rho(t,0)\;=\; \a\;,\\
\rho(t,1)\;=\;\b \;.
\end{cases}
\end{align*}
In the case $\theta=1$, the boundary condition is governed by Robin boundary conditions:
\begin{align*}
\begin{cases}
\partial_u\rho(t,0)\;=\; c\left[\rho(t,0)-\a\right]\;,\\
\partial_u\rho(t,1)\;=\; c\left[\b-\rho(t,1)\right]\;.
\end{cases}
\end{align*}
In the case $\theta>1$, the boundary condition is governed by Neumann boundary conditions:
\begin{align*}
\begin{cases}
\partial_u\rho(t,0)\;=\; 0\;,\\
\partial_u\rho(t,1)\;=\; 0 \;.
\end{cases}
\end{align*}
We do not review precise definitions of weak solutions to these Cauchy problems here,
see \cite[Subsection 2.3]{bmns17} for them.
For each $\theta\ge0$, denote these Cauchy problems by ${\rm (HDE)}_\theta$.

For each $N\in\N$, denote by $\{S^N_t : t \ge 0\}$ the semigroup associated to
the Markov process generated by $N^2L_N$ and by $\mu_N$ a given initial distribution.
Note that the distribution of the process at time $t$ is given by $\mu_NS_t^N$.

The following result has been established in Baldasso et al. \cite{bmns17}.

\begin{thm}[Hydrodynamic limit]
Assume that the initial distribution $\mu_N$ is associated to 
a measurable function $\rho_0:[0,1]\to[0,1]$.
Namely, it holds for any $\delta>0$ and continuous function $H:[0,1]\to\R$ that
\begin{align*}
\lim_{N\to\infty}\mu_N\left( \eta: \left| \dfrac{1}{N-1}\sum_{x\in I_N}H(x/N)\eta(x) - 
\int_0^1 H(u) \rho_0(u) du \right | \ge \delta\right)\;=\; 0 \;.
\end{align*}
Then, for any $t\ge0$, $\delta>0$ and continuous function $H:[0,1]\to\R$, we have
\begin{align*}
\lim_{N\to\infty}\mu_NS_t^N\left( \eta: \left| \dfrac{1}{N-1}\sum_{x\in I_N}H(x/N)\eta(x) - 
\int_0^1 H(u) \rho(t,u) du \right | \ge \delta\right)\;=\; 0 \;.
\end{align*}
where $\rho=\rho(\theta):[0,\infty)\times[0,1]\to[0,1]$ stands for
the unique weak solution to $\HDE$.
\end{thm}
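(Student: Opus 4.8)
The plan is to use the entropy method of Guo, Papanicolaou and Varadhan: I would establish tightness of the empirical measures, characterize every limit point as supported on weak solutions of $\HDE$, and then conclude from uniqueness of such weak solutions. Introduce the empirical measure
\begin{align*}
\pi_t^N(du)\;=\;\frac{1}{N-1}\sum_{x\in I_N}\eta_t(x)\,\delta_{x/N}(du)\;,
\end{align*}
viewed as a random element of the Skorokhod space $\D([0,\infty),\M)$, where $\M$ is the set of nonnegative measures on $[0,1]$ with total mass at most one. The first step is tightness of the laws of $\{\pi_\cdot^N\}_N$ under $\mu_N$, which by the usual criterion reduces to an oscillation estimate for $\lan\pi_t^N,H\ran$ for smooth $H$; this follows from Dynkin's formula together with the quadratic variation bound below.

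The heart of the argument is the analysis of the martingale
\begin{align*}
M_t^N(H)\;=\;\lan\pi_t^N,H\ran-\lan\pi_0^N,H\ran-\int_0^t N^2L_N\lan\pi_s^N,H\ran\,ds\;.
\end{align*}
A direct computation gives quadratic variation of order $N^{-1}$ (with a boundary contribution of order $N^{-\th}$ that is likewise negligible for the test functions used below), so $M_t^N(H)\to0$ in $L^2$. To treat the integral term I would split $L_N=L_{N,0}+L_{N,b}^\a+L_{N,b}^\b$ and perform a discrete integration by parts. The bulk part produces $\lan\pi_s^N,\partial_u^2H\ran$ up to error, together with boundary corrections $\partial_uH(0)\,\eta_s(1)$ and $-\partial_uH(1)\,\eta_s(N-1)$ coming from the one-sided differences at the extreme sites. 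Using the identities $r_\a(\eta)(1-2\eta(1))=\a-\eta(1)$ and $r_\b(\eta)(1-2\eta(N-1))=\b-\eta(N-1)$, the two boundary generators contribute
\begin{align*}
c\,\frac{N^{2-\th}}{N-1}\,H(1/N)\big(\a-\eta_s(1)\big)\quad\text{and}\quad c\,\frac{N^{2-\th}}{N-1}\,H\big(\tfrac{N-1}{N}\big)\big(\b-\eta_s(N-1)\big)\;,
\end{align*}
which are of order $N^{1-\th}$. This exponent is exactly the source of the phase transition.

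To pass to the limit I would invoke two standard ingredients. The replacement lemma (local ergodicity), proved from the entropy inequality -- the relative entropy of $\mu_N$ with respect to a reference product Bernoulli measure being of order $N$ -- together with the one- and two-block estimates, allows me to replace the local occupation variables by the macroscopic density; in particular $\lan\pi_s^N,\partial_u^2H\ran$ converges to $\int_0^1\partial_u^2H(u)\,\rho(s,u)\,du$, and the boundary occupations $\eta_s(1)$, $\eta_s(N-1)$ are replaced by the traces $\rho(s,0)$, $\rho(s,1)$. An energy estimate shows that any limit point concentrates on paths with $\rho\in L^2([0,T],\H^1)$, giving meaning to these traces and justifying the integration by parts in the weak formulation. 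Collecting terms, every limit point is supported on weak solutions of the heat equation in the bulk with the $\th$-dependent boundary condition.

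The main obstacle is the boundary, where the three regimes genuinely diverge. For $\th>1$ the flip terms vanish, and testing against $H$ with $\partial_uH(0)=\partial_uH(1)=0$ removes the remaining bulk corrections, yielding the Neumann condition; for $\th=1$ the flip terms survive as $cH(0)(\a-\rho(s,0))$ and $cH(1)(\b-\rho(s,1))$, which combine with the bulk corrections to produce precisely the Robin condition. The delicate case is $\th<1$: here the flip terms diverge like $N^{1-\th}$, so one instead tests against $H$ vanishing at the endpoints (killing these terms) and argues separately that the Dirichlet traces hold. This last point is the genuinely hard one: from boundedness of the martingale and of $\lan\pi_t^N,H\ran$ one sees that $N^{1-\th}\int_0^t\big(\a-\eta_s(1)\big)\,ds$ stays bounded, whence, using an entropy-production (Dirichlet form) bound near the slow boundary, $\int_0^t(\a-\eta_s(1))\,ds$ and its right-boundary analogue must vanish in the limit, forcing $\rho(s,0)=\a$ and $\rho(s,1)=\b$. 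Once the characterization is complete in each regime, uniqueness of weak solutions to $\HDE$ promotes subsequential convergence to convergence of the whole sequence, which is the asserted law of large numbers.
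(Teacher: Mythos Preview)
The paper does not prove this theorem: it is quoted verbatim as a result of Baldasso, Menezes, Neumann and Souza \cite{bmns17} and is used as input for the rest of the argument, so there is no ``paper's own proof'' to compare against. Your outline is essentially the entropy-method strategy that \cite{bmns17} itself carries out (tightness, Dynkin martingale, discrete summation by parts, energy estimate, replacement at the boundary, uniqueness), and the identification of $N^{1-\theta}$ as the exponent governing the phase transition is correct.

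One point in your sketch deserves care. In the regime $\theta<1$ you write that boundedness of $M_t^N(H)$ and of $\lan\pi_t^N,H\ran$ forces $N^{1-\theta}\int_0^t(\a-\eta_s(1))\,ds$ to stay bounded, and then that an entropy/Dirichlet-form argument makes $\int_0^t(\a-\eta_s(1))\,ds$ vanish. The first inference is not automatic: for a generic test function $H$ the two diverging boundary terms (left and right) could in principle cancel, so one does not get boundedness of each separately just from the martingale identity. In \cite{bmns17} the Dirichlet trace is instead obtained by a direct replacement lemma at the boundary (their Lemma~5.5 together with the Dirichlet-form bound analogous to \eqref{21} here), which shows that the time average of $\eta_s(1)$ can be replaced by $\a$ with an error controlled by the boundary Dirichlet form; the factor $N^{1-\theta}$ is then absorbed because the boundary Dirichlet form itself carries $N^{-\theta}$. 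Apart from this, your plan matches the established proof.
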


\subsection{Hydrostatic limit}\label{2.3}

We examine in this subsection the hydrostatic limit established in Baldasso et al. \cite{bmns17} and outline their proof
to clarify the difference between their approach and ours.
The hydrodynamic limit describes the dynamical behavior of the empirical measure
while the hydrostatic limit states the law of large numbers for the empirical measure under 
the stationary state $\mu_N^{ss}$.

For each $\theta\ge0$, let $\rho_\theta:[0,1]\to[0,1]$ be the function defined by
\begin{align*}
\rho_\theta(u)\;=\;
\begin{cases}
     \rho_{D}(u)\;=\; (\b-\a)u+\a\;,  & \text{if $\theta<1$}\;,\\
     \rho_{R}(u)\;=\;\dfrac{c(\b-\a)}{2+c}u + \a + \dfrac{\b-\a}{2+c}\;,  & \text{if $\theta=1$}\;,\\
     \rho_{N}(u)\;=\;\dfrac{\b+\a}{2}\;,  & \text{if $\theta>1$}\;.\\
\end{cases}
\end{align*}
Note that, for each $\theta\ge0$, $\rho_\theta$ is a stationary solution to $\HDE$.

The following result has been established in Baldasso et al. \cite{bmns17}.

\begin{thm}[Hydrostatic limit]\label{11}
For any $\delta>0$ and continuous function $H:[0,1]\to\R$, we have
\begin{align*}
\lim_{N\to\infty}\mu_N^{ss}\left( \eta: \left| \dfrac{1}{N-1}\sum_{x\in I_N}H(x/N)\eta(x) - 
\int_0^1 H(u) \rho_\theta(u) du \right | \ge \delta\right)\;=\; 0 \;.
\end{align*}
\end{thm}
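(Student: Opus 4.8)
The plan is to follow the dynamical, two-time-scale strategy of Farfan, Landim and Mourragui \cite{flm11} and of Landim and Tsunoda \cite{lt18}, avoiding correlation estimates entirely. Write $\pi^N=\frac{1}{N-1}\sum_{x\in I_N}\eta(x)\,\delta_{x/N}$ for the empirical measure and let $\pi^\star$ be an arbitrary limit point of the laws of $\pi^N$ under $\mu_N^{ss}$; these laws are tight on the space of nonnegative measures on $[0,1]$ dominated by Lebesgue measure, so it suffices to show that every such $\pi^\star$ is the Dirac mass at $\rho_\theta(u)\,du$. To access the dynamics, consider the process with generator $N^2L_N$ started from $\mu_N^{ss}$; since $\mu_N^{ss}$ is invariant, the law $\mathbb{Q}_N$ of the trajectory $t\mapsto\pi^N_t$ is stationary under time translation. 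The family $\{\mathbb{Q}_N\}$ is tight, and by the martingale characterization underlying the hydrodynamic limit, any limit point $\mathbb{Q}$ is concentrated on weak solutions $\rho(t,u)\,du$ of $\HDE$; moreover, by time-stationarity, the law of $\rho(t,\cdot)$ equals $\pi^\star$ for every $t\ge0$.

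For $\theta\le1$ this already concludes the proof, reproducing the content of Section \ref{sec3}. Indeed, in the Dirichlet ($\theta<1$) and Robin ($\theta=1$) regimes the Cauchy problem $\HDE$ admits $\rho_\theta$ as its unique stationary solution and every weak solution relaxes to it, $\rho(t,\cdot)\to\rho_\theta$ as $t\to\infty$ (a standard energy estimate against $\rho(t,\cdot)-\rho_\theta$ using the dissipativity of the Laplacian with the prescribed boundary conditions). Thus for $\mathbb{Q}$-a.e.\ trajectory $\rho(t,\cdot)\to\rho_\theta$, while the law of $\rho(t,\cdot)$ is the constant $\pi^\star$; letting $t\to\infty$ forces $\pi^\star=\delta_{\rho_\theta(u)\,du}$.

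For $\theta>1$ this argument degenerates, which is the heart of the matter: the Neumann problem conserves the total mass $\int_0^1\rho(t,u)\,du$, every constant is a stationary solution, and each solution merely relaxes to the constant equal to its own initial mass. Hence the above yields only that $\pi^\star$ is supported on constant profiles $\{m\,du:m\in[0,1]\}$, without identifying $m$. To pin down $m$ I pass to the sub-diffusive time scale, i.e.\ replace the generator $N^2L_N$ by $N^{1+\theta}L_N$, for which $\mu_N^{ss}$ remains invariant. Writing $m_N(\eta)=\frac{1}{N-1}\sum_{x\in I_N}\eta(x)$ and using that the bulk generator conserves the number of particles, a direct computation gives
\begin{align*}
N^{1+\theta}L_N\,m_N(\eta)\;=\;\frac{cN}{N-1}\big[(\a-\eta(1))+(\b-\eta(N-1))\big]\;.
\end{align*}
On this scale the bulk homogenizes instantaneously, so $\eta(1)$ and $\eta(N-1)$ may be replaced by the current density $m_N$, and the scalar $m_N$ converges to the solution of the autonomous equation $\dot m=c(\a+\b-2m)$, whose unique and globally attracting equilibrium is $(\a+\b)/2=\rho_N$. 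Running the stationarity-plus-relaxation argument of the previous paragraph for the scalar $m_N$ under $\mu_N^{ss}$ on this time scale then forces the constant $m$ to equal $(\a+\b)/2$, which completes the identification $\pi^\star=\delta_{\rho_\theta(u)\,du}$ and hence Theorem \ref{11}.

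I expect the main obstacle to be the two ingredients specific to the $\theta>1$ case: justifying the boundary replacement $\eta(1),\eta(N-1)\approx m_N$ on the sub-diffusive scale (a one-block-type estimate exploiting the fast bulk homogenization), which is exactly what replaces the correlation-function estimates of the original proof, and deriving the closed scalar equation for $m_N$ as a genuine hydrodynamic-type limit together with the energy bounds needed to characterize the limiting trajectories as its solutions.
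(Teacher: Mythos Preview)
Your proposal is correct and follows essentially the same route as the paper: the diffusive-scale argument for $\theta\le1$ via Proposition \ref{13}, and for $\theta>1$ the passage to the sub-diffusive scale $N^{1+\theta}$, the computation of $N^{1+\theta}L_N m_N$, the replacement of $\eta(1),\eta(N-1)$ by $m_N$ (the paper's Lemma \ref{04}, proved via the entropy inequality and a moving-particle Dirichlet-form bound rather than a literal one-block estimate), the limiting ODE $\dot m=c(\a+\b-2m)$, and the stationarity-plus-relaxation conclusion. The two technical obstacles you flag are exactly the content of Propositions \ref{12}, \ref{16} and Lemmas \ref{04}, \ref{06}.
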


We here outline the proof given in Baldasso et al. \cite{bmns17}.
Their proof is summarized as follows.
For $x,y\in I_N$, let $\rho^N(x)$ and $\phi^N(x,y)$ be the mean of $\eta(x)$
and the two-point correlation function of $\eta(x), \eta(y)$ under the stationary state $\mu_N^{ss}$, respectively:
\begin{align*}
\rho^N(x)\;&=\; \int_{\Om_N} \eta(x) \mu_N^{ss}(d\eta)\;,\\
\phi^N(x,y)\;&=\; \int_{\Om_N} \left[ \eta(x) - \rho^N(x) \right] \left[ \eta(y) - \rho^N(y)\right]  \mu_N^{ss}(d\eta)\;.
\end{align*}

Since $\mu_N^{ss}$ is invariant with respect to $L_N$, for each $x\in I_N$, we have
\begin{align*}
\int_{\Om_N} L_N\eta(x) \mu_N^{ss}(d\eta)\;=\;0 \;.
\end{align*}
Computing the left-hand side, we can obtain a system of linear equations
for $\{\rho^N(x):x\in I_N\}$, see the proof of \cite[Lemma 3.1]{bmns17} for this system.
Since this system is linear, it is not difficult to obtain the explicit formula
\begin{align}\label{24}
\rho^N(x)\;=\; a_Nx+b_N\;, \quad x\in I_N\;,
\end{align}
where
\begin{align*}
a_N\;=\;\dfrac{c(\b-\a)}{2N^\theta +c(N-2)}\quad \text{and}\quad b_N\;=\; \a+a_N\left( \dfrac{N^\theta}{c}-1\right)\;.
\end{align*}

A similar computation for $\left[ \eta(x) -\rho^N(x) \right] \left[ \eta(y) - \rho^N(y) \right]$
together with some coupling argument
permits us to obtain the estimate
\begin{align}\label{25}
\max_{0<x<y<N}\left| \phi^N(x,y) \right| \;\le\;\dfrac{C}{N^\theta+N}\;,
\end{align}
for some constant $C>0$.
Theorem \ref{11} easily follows from \eqref{24}, \eqref{25}
and standard arguments based on the Chebyshev inequality.

We conclude this subsection mentioning a few comments on the proof.
For $\theta \le 1$, note that the function $\rho_\theta$ is the unique stationary solution to $\HDE$.
This fact together with Proposition \ref{13} below implies Theorem \ref{11} immediately.
However, for $\theta>1$, the set of stationary solutions to $\HDE$ is not a singleton,
since the corresponding Neumann Laplacian on $[0,1]$ has the eigenvalue $0$ in its spectrum.
Therefore, the concentration result, given in Proposition \ref{13}, does not imply Theorem \ref{11}
for $\theta>1$. To overcome this difficulty, besides Proposition \ref{13},
we need another characterization of the density $(\a+\b)/2$ amoung $[0,1]$,
which is a limiting density in the case $\theta>1$.
Indeed, we will see that $(\a+\b)/2$ can be characterized as a unique attractor of the integral equation \eqref{17}.
This is what we will investigate in Section \ref{sec4}.

An approach based on the estimates for the correlation functions is very useful
for several problems if available, see for instance \cite{fgn18, efgnt18, gjnm18}.
However, this approach can not be applied to almost all interacting systems,
even so-called {\it gradient} systems.
Compared to this approach, the method developed in this paper is robust
enough to be applicable to a gradient particle system
(should be possible for a {\it non-gradient} system).
For instance, one can obtain similar results for the setting of Farfan et al. \cite{flm11} with slow boundary.

\section{The diffusive time scale $N^2$}\label{sec3}
We investigate in this section the diffusive time scale,
to analyze the empirical measure under $\mu_N^{ss}$.
As examined in Section \ref{sec1},
we shall follow the method developed in Landim and Tsunoda \cite{lt18},
to prove some concentration result, stated in Proposition \ref{13}.
As a direct consequence of Proposition \ref{13},
which is a main result of this section, we shall prove Theorem \ref{11} for $\theta\le1$.

Let $\M_+$ be the set of all Borel measures on $[0,1]$, whose total mass is bounded above by $1$.
$\M_+$ is endowed with the weak topology, which is metrizable and becomes a compact Polish space.
Denote its metric by $d$, see for instance \cite[Subsection 2.2]{lt18} for the definition of $d$.
For a masure $\pi\in\M_+$ and a function $H:[0,1]\to\R$,
denote by $\lan \pi, H \ran $ the integral of $H$ with respect to $\pi$
whenever it has a meaning.
For functions $H_1,H_2:[0,1]\to\R$,  we also denote by $\lan H_1, H_2 \ran$ the $L^2$-inner product
with respect to the Lebesgue measure $du$ whenever it has a meaning.

For a configuration $\eta\in\Om_N$, define the empirical measure by
\begin{align*}
\pi_N(du) \;=\;\pi(\eta, du)\;=\; \dfrac{1}{N-1}\sum_{x\in I_N} \eta(x)\de_{x/N}(du)\;,
\end{align*}
where $\de_u$ stands for the point mass at $u\in[0,1]$.
Recall the definition of the stationary state $\mu_N^{ss}$, introduced at the last
paragraph of Subsection \ref{2.1}.
Define the probability measure $\cP_N$ on $\M_+$ by
\begin{align*}
\cP_N=\mu_N^{ss}\circ (\pi_N)^{-1}\;.
\end{align*}

For each $\theta\ge0$, let $\cE_\theta$ be the set of all measures $\pi(du)=\rho(u)du$ in $\M_+$ whose density
is a  stationary solution to $\HDE$. It is easy to see that $\cE_\theta$ coincides with
$\{\rho_D(u)du\}$ for $\theta<1$, $\{\rho_R(u)du\}$ for $\theta=1$ and 
$\{\varrho du: \varrho\in[0,1]\}$ for $\theta>1$, respectively.

Following the proof of \cite[Theorem 2.2]{lt18}, we can prove the following proposition:

\begin{prp}\label{13}
The sequence of measures $\{\mathcal P_N\}_{N\in\N}$ asymptotically concentrates on the set $\mathcal E_\theta$.
Namely, for any $\delta>0$, we have
\begin{align*}
\lim_{N\to\infty} \mathcal P_N\left( \pi\in\M_+: \inf_{\overline\pi\in\cE_\theta} d(\pi, \overline\pi)\ge\delta \right) \;=\; 0\;.
\end{align*}
\end{prp}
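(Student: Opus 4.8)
The plan is to lift the problem to path space and to exploit the stationarity of $\mu_N^{ss}$, so that no information on the correlation functions is ever needed. Fix a horizon $T>0$ and let $Q_N$ be the law on $D([0,T],\M_+)$ of the trajectory $t\mapsto \pi(\eta_t,\cdot)$ of the empirical measure, where $\{\eta_t\}$ is the process generated by $N^2L_N$ with initial distribution $\mu_N^{ss}$. Since $\mu_N^{ss}$ is invariant for $N^2L_N$, the measure $Q_N$ is invariant under time translations, and its one-time marginal at every $t$ equals $\cP_N$. As $\M_+$ is compact, $\{Q_N\}$ is tight; let $Q^*$ be a subsequential limit. Its time marginals are all equal and coincide with the corresponding subsequential limit $\cP^*$ of $\{\cP_N\}$. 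It therefore suffices to prove that every such $Q^*$ is concentrated on constant-in-time trajectories taking value in $\cE_\theta$.

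First I would import from the hydrodynamic analysis of \cite{bmns17} and the proof of \cite[Theorem 2.2]{lt18} the two standard ingredients. One is an energy estimate ensuring that $Q^*$ is concentrated on trajectories $\rho(t,u)\,du$ with $\rho\in L^2([0,T];H^1([0,1]))$, which is what allows one to speak of weak solutions and to integrate by parts. The other is the identification of limit points: combining tightness with the one- and two-block replacement arguments and the boundary computations of \cite{bmns17}, every limit point $Q^*$ is supported on weak solutions of $\HDE$ in the regime dictated by $\theta$. The only point that differs from the classical hydrodynamic limit is that the initial datum is random, distributed as the limit of $\cP_N$, rather than a deterministic profile; this is harmless precisely because we work on path space, where stationarity is available.

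The core of the argument is then a Lyapunov/attractor step. For each $\theta$ I would use the functional $V(\rho)=\tfrac12\lan \rho-\rho_\theta,\rho-\rho_\theta\ran$. Writing $w=\rho-\rho_\theta$ and using that $\rho_\theta$ is affine, an integration by parts along $\HDE$ gives $\frac{d}{dt}V(\rho(t,\cdot))=-\lan \partial_u w,\partial_u w\ran + B_\theta(\rho(t,\cdot))$, where the boundary contribution $B_\theta$ is nonpositive thanks to the boundary condition of $\HDE$: it vanishes in the Dirichlet and Neumann cases and equals $-c\,[\,w(t,0)^2+w(t,1)^2\,]$ in the Robin case. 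Hence $\frac{d}{dt}V\le 0$, with equality forcing $\partial_u w\equiv 0$ together with $B_\theta=0$, which in each regime is exactly the condition $\rho(t,\cdot)\in\cE_\theta$. Now invoke stationarity: by time-invariance of $Q^*$ the map $t\mapsto \int V(\rho(t,\cdot))\,dQ^*$ is constant, while the identity above makes it nonincreasing with strict decrease off $\cE_\theta$. Therefore $Q^*$-almost surely $V(\rho(t,\cdot))$ is constant in $t$, so $\rho(t,\cdot)\in\cE_\theta$ for a.e.\ $t$, and by continuity of the trajectory the whole path is a single element of $\cE_\theta$. Thus $\cP^*(\cE_\theta)=1$. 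Since $\cE_\theta$ is closed and this holds for every subsequential limit, the portmanteau theorem yields $\lim_N \cP_N\big(\{\pi:\inf_{\overline\pi\in\cE_\theta}d(\pi,\overline\pi)\ge\delta\}\big)=0$, which is the assertion.

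I expect the main obstacle to be the Lyapunov step, specifically justifying the differentiation identity for $\frac{d}{dt}V(\rho(t,\cdot))$ along merely weak solutions and controlling the boundary terms in the Dirichlet and Robin cases. This rests on the regularity furnished by the energy estimate, allowing the weak formulation to be tested against $w$ itself (or a regularization thereof), and it is exactly here that the boundary condition selected by $\theta$ enters. For $\theta>1$ the critical set of $V$ is the whole family of constants, which is why Proposition \ref{13} alone does not pin down the limiting density and the refined analysis of Section \ref{sec4} becomes necessary; the compactness of $\M_+$ and the stationarity are precisely what let one dispense with any estimate on the correlation functions $\phi^N$.
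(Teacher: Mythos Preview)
Your proposal is correct and follows essentially the same route as the paper, which simply invokes \cite[Theorem~2.2]{lt18}: lift to path space under the stationary law, identify subsequential limits as weak solutions of $\HDE$ via the hydrodynamic machinery of \cite{bmns17}, and then use stationarity together with the attraction of $\cE_\theta$ to conclude. Your explicit Lyapunov computation with $V(\rho)=\tfrac12\lan\rho-\rho_\theta,\rho-\rho_\theta\ran$ is one concrete way to carry out the attractor step that the paper leaves to the reference; the only delicate point, which you correctly flag, is justifying the energy identity $\frac{d}{dt}V=-\lan\partial_u w,\partial_u w\ran+B_\theta$ for merely weak solutions, and this is exactly what the $L^2([0,T];H^1)$ energy estimate is for.
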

The proof of this proposition is consisting of two main ingredients, as examined in the first paragraph of \cite[Section 3]{lt18}:
the macroscopic density of the system is described by a hydrodynamic limit,
and for any initial profile the solution of the hydrodynamic equation converges to
some stationary solution as time goes to infinity.
Indeed, the exclusion process with slow boundary and its hydrodynamic equation satisfy
these two properties for any $\theta\ge0$.
Invoking these properties, the proof of Proposition \ref{13} is
completely same as the one of \cite[Theorem 2.2]{lt18} and thus is omitted.

Note that $\cE_\theta$ is a singleton for each $\theta\le1$: $\cE_\theta=\{\rho_D(u)du\}$ or $\{\rho_R(u)du\}$.
Theorem \ref{11} for $\theta\le1$ follows from Proposition \ref{13} immediately.

\begin{proof}[Proof of Theorem \ref{11} for $\theta\le 1$]
From Proposition \ref{13} and the fact that
$\cE_\theta$ is a singleton for each $\theta\le1$, the empirical measure $\pi_N$ under $\mu_N^{ss}$
converges to $\rho_\theta(u)du$ as $N\to\infty$ in probability.
Therefore, for any continuous function $H:[0,1]\to\R$, the random variable $\lan \pi_N, H\ran$ under $\mu_N^{ss}$
converges to $\lan \rho_\theta, H \ran$ as $N\to\infty$ in probability,
which completes the proof of Theorem \ref{11} for $\theta\le 1$.
\end{proof}

\section{The sub-diffusive time scale $N^{1+\theta}$}\label{sec4}
In the rest of this paper, we always treat with the case $\theta>1$.
Since the solution to the heat equation with $0$-Neumann boundary conditions conserves the total mass,
the total number of particles in $I_N$ can not evolve under the diffusive time scale.
This is exactly caused by the presence of slow boundary.
However, at the process level, we can observe exchange of particles
at a rate proportional to $N^{-\theta}$ through the boundary.
Therefore, to observe the correct evolution of the total number of particles in $I_N$,
we need to introduce another time scale, which should be longer than the diffusive time scale.
As understood in the computations below, the correct speeded up factor (or the time scale) is given by $N^{1+\theta}$,
which is in fact longer than the diffusive time scale in the case $\theta>1$.

For the sake of the previous paragraph, let $\{\eta_t^N:t\ge0\}$ be the Markov process generated by $N^{1+\theta}L_N$
with the initial distribution $\mu_N^{ss}$.
For each $t\ge0$, dente by $m_t^N$ the averaged density defined by
\begin{align*}
m_t^N\;=\;\dfrac{1}{N-1}\sum_{x\in I_N}\eta_t^N(x)\;.
\end{align*}
By the reason examined in the previous paragraph,
$m_t^N$ does not evolve under the diffusive time scale $N^2$.
On the other hand, as we will see later, $m_t^N$ evolves macroscopically under the time scale $N^{1+\theta}$.

For each $T>0$, let $D([0,T], \R)$ be the set of all c\`adl\`ag trajectories $m_\cdot:[0,T]\to\R$,
endowed with the Skorokhod topology.
For each $N\in\N$, let $Q_N=Q_{N,T}$ be the distribution of
$\{m_{\cdot}^N\}$ on $D([0,T], \R)$.

Our approach to study the sequence $\{Q_N\}_{N\in\N}$
is based on a standard machinery used in the study of hydrodynamic limit.
We first show the relative compactness of the sequence $\{Q_N\}_{N\in\N}$
and characterize its all limit points.
This is the content of Propositions \ref{12}, \ref{16} below, respectively.

We start with the relative compactness of the sequence $\{Q_N\}_{N\in\N}$.

\begin{prp}\label{12}
The sequence $\{m_\cdot^N\}_{N\in\N}$ is relatively compact in $D([0,T], \R)$.
\end{prp}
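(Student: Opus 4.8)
The plan is to establish relative compactness of $\{m_\cdot^N\}_{N\in\N}$ in $D([0,T],\R)$ via the standard Aldous tightness criterion. Since each $m_t^N$ takes values in the compact set $[0,1]$, the family of one-dimensional distributions is automatically tight, so the uniform boundedness condition is trivially satisfied. The real work is to verify the Aldous condition on the modulus of continuity: for every $\e>0$,
\begin{align*}
\lim_{\ga\to0}\limsup_{N\to\infty}\sup_{\t\in\mathcal{T}_T,\,\sigma\le\ga}
\P\left(\left|m_{\t+\sigma}^N-m_\t^N\right|\ge\e\right)\;=\;0\;,
\end{align*}
where $\mathcal{T}_T$ denotes the stopping times bounded by $T$ with respect to the natural filtration.

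The key computational step is to understand how $m_t^N$ evolves under the accelerated generator $N^{1+\theta}L_N$. First I would apply Dynkin's formula to the function $\eta\mapsto\frac{1}{N-1}\sum_{x\in I_N}\eta(x)$. Because $L_{N,0}$ is the symmetric exclusion generator, it conserves the total number of particles, so $N^{1+\theta}L_{N,0}$ contributes nothing to the drift of $m_t^N$; only the boundary generators $L_{N,b}^\a$ and $L_{N,b}^\b$ survive. A direct computation gives
\begin{align*}
N^{1+\theta}L_N\,m^N(\eta)\;=\;\frac{cN}{N-1}\Big(r_\a(\eta)\big[1-2\eta(1)\big]+r_\b(\eta)\big[1-2\eta(N-1)\big]\Big)\;,
\end{align*}
where the factor $N^{-\theta}$ from the boundary rates exactly cancels the $N^\theta$ in the time scale, leaving an expression of order $O(1/N)$ per unit of $m^N$-mass. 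This means the martingale decomposition
\begin{align*}
m_t^N\;=\;m_0^N+\int_0^t N^{1+\theta}L_N\,m^N(\eta_s^N)\,ds+M_t^N
\end{align*}
has a drift whose integrand is uniformly bounded by a constant of order one, so $\big|\int_\t^{\t+\sigma}N^{1+\theta}L_N m^N(\eta_s^N)\,ds\big|\le C\sigma$ on any interval of length $\sigma\le\ga$, controlling the bounded-variation part.

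For the martingale part $M_t^N$, I would estimate its quadratic variation via the carré-du-champ: $\langle M^N\rangle_t=\int_0^t\big(N^{1+\theta}L_N (m^N)^2-2m^N\,N^{1+\theta}L_N m^N\big)(\eta_s^N)\,ds$. Since each boundary flip changes $m^N$ by only $\pm\frac{1}{N-1}$, each jump contributes order $N^{-2}$ to the square, while the total boundary jump rate is $N^{1+\theta}\cdot cN^{-\theta}=cN$; hence the quadratic variation density is of order $N\cdot N^{-2}=N^{-1}$, giving $\E[\langle M^N\rangle_{\t+\sigma}-\langle M^N\rangle_\t]\le C\sigma/N\to0$. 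By the optional stopping theorem and Chebyshev's inequality applied to $\E[(M_{\t+\sigma}^N-M_\t^N)^2]=\E[\langle M^N\rangle_{\t+\sigma}-\langle M^N\rangle_\t]$, the martingale increments are negligible. Combining the drift and martingale bounds yields the Aldous estimate, and relative compactness follows. The main obstacle I anticipate is purely bookkeeping: carefully carrying the $N^{1+\theta}\cdot N^{-\theta}$ cancellation through both $L_N m^N$ and $L_N(m^N)^2$ and confirming that no hidden factor of $N$ spoils the $O(\sigma)$ drift bound — the strong law here is that the time scale is chosen precisely so the boundary drift is $O(1)$ rather than vanishing or diverging, which is exactly why $N^{1+\theta}$ is the correct acceleration.
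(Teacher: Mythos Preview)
Your proposal is correct and follows essentially the same route as the paper: Dynkin's decomposition of $m_t^N$, the observation that $L_{N,0}$ annihilates the total mass so only the boundary terms contribute an $O(1)$ drift after the $N^{1+\theta}\cdot N^{-\theta}$ cancellation, and an $O(N^{-1})$ bound on the quadratic variation of the martingale part. The only cosmetic difference is that the paper argues relative compactness of each summand (initial value, integral term via Aldous, martingale via its vanishing quadratic variation) separately, whereas you apply the Aldous criterion directly to $m_\cdot^N$ through the same decomposition; the content is identical.
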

\begin{proof}
Fix $T>0$. It is enough to show that
the sequence $\{m_\cdot^N\}_{N\in\N}$ is relatively compact in $D([0, T], \R)$.
For this purpose, introduce the function $G_N(\eta)=G(\eta)=(N-1)^{-1}\sum_{x\in I_N} \eta(x)$ and the corresponding Dynkin's martingale:
\begin{align}\label{02}
M_t^N\;&=\; G(\eta_t^N) -G(\eta_0^N) -N^{1+\theta}\int_0^t (L_NG) (\eta_s^N) ds\;, \quad t\ge0\;.
\end{align}

It follows from the definition of $m_t^N$ that $G(\eta_t^N)=m_t^N$.
Since the total number of particles in $I_N$ is conserved by $L_{N,0}$, we have $L_{N,0}G = 0$.
One the other hand, as $L_{N,b}^\a$ and $L_{N,b}^\b$ act only at the left and right boundary, respectively, we have
\begin{align*}
(L_{N,b}^\a+L_{N,b}^\b)G(\eta)
\;&=\; \dfrac{c}{N^{\theta}(N-1)}\left\{ r_\a(\eta)\left[1-2\eta(1)\right] + r_\b(\eta)\left[1-2\eta({N-1})\right] \right\}\\
\;&=\; \dfrac{c}{N^{\theta}(N-1)}\left[\a+\b-\eta(1)-\eta(N-1)\right]\;.
\end{align*}
Therefore \eqref{02} can be rewritten as
\begin{align}\label{14}
m_t^N\;&=\; m_0^N+ M_t^N + \dfrac{cN}{N-1}\int_0^t \left[\a+\b-\eta_s^N(1)-\eta_s^N(N-1)\right] ds\;\\\notag
\;&=\; m_0^N+ M_t^N + c\int_0^t \left[\a+\b-\eta_s^N(1)-\eta_s^N(N-1)\right] ds + O(N^{-1})\;,
\end{align}
where big $O$ notation stands for the Bachman-Landu notation.

Note that the sequence $\{ m_0^N\}_{N\in\N}$ is relatively compact since $m_0^N$ takes values in $[0,1]$ for any $N\in\N$.
On the other hand, in view of Aldous's criterion, cf. \cite[page 51, Proposition 4.1.6]{kl99}, we can obtain the relative
compactness of the integral term in the last line of \eqref{14}.
Therefore, to conclude the proof, it is enough to show that the sequence $\{ M_\cdot^N\}_{N\in\N}$
is relatively compact in $D([0,T], \R)$.

Indeed, it follows from a straightforward computation that the quadratic variation of $M_t^N$
is given by
\begin{align}\label{30}
\dfrac{cN}{(N-1)^2}&\int_0^t
\left| \eta_s^N(1)-\a \right| + \left| \eta_s^N(N-1)-\b \right| ds
\;=\; O(N^{-1})\;.
\end{align}
This formula together with the standard argument as in the proof of \cite[page 55, Theorem 4.2.1]{kl99}
gives the relative compactness for the sequence $\{ M_\cdot^N\}_{N\in\N}$,
which completes the proof of Proposition \ref{12}.
\end{proof}

It follows from \eqref{30} that the martingale term $M_t^N$ vanishes in the limit $N\to\infty$.
Therefore, if we can replace $\eta_s^N(1)+\eta_s^N(N-1)$ by $2m_s^N$ in \eqref{14},
we can obtain the following integral equation in the limit:
\begin{align}\label{17}
m_t\;=\; m_0 + c\int_0^t \left[\a+\b -2m_s \right] ds\;, \quad t\ge0\;.
\end{align}
This replacement can not be achieved in the diffusive time scale
since the relaxation time, which is the inverse of the spectral gap,
of the exclusion process inside a box with side length $\ell$ is of order $\ell^2$.
However, such a replacement should be achieved in the time scale $N^{1+\theta}$.
This is the idea hidden in the proof of Lemma \ref{04}, so-called {\it replacement lemma}.

Before starting the proof of the replacement lemma,
we introduce some notation and estimates, which will be used in the proof of the replacement lemma.

For two probability measures $\mu, \nu$ on $\Om_N$, let $H_N(\mu|\nu)$
be the {\it relative entropy} of $\mu$ with respect to $\nu$:
\begin{align*}
H_N(\mu|\nu)\;=\; \sup_f \left\{\int_{\Om_N} fd\mu -\log \int_{\Om_N} e^f d\nu\right\} \;,
\end{align*}
where the supremum is carried over all functions $f:\Om_N\to\R$.
It is well known that
\begin{align*}
H_N(\mu|\nu)\;=\;
\int_{\Om_N} \dfrac{d\mu}{d\nu}\, \log{\dfrac{d\mu}{d\nu}} \, d\nu \;,
\end{align*}
if $\mu$ is absolutely continuous with respect to $\nu$,
$H_N(\mu|\nu) = \infty$, otherwise.
Since there is  at most one particle per site,
there exists a constant $C_0=C_0(\a)>0$ such that
\begin{align}\label{20}
H_N(\mu|\nu_\a^N) \;\le\; C_0N\;,
\end{align}
for any probability measure $\mu$ on $\Om_N$.

A function $f:\Om_N\to[0,\infty)$ is said to be a {\it density} if $\int f d\nu_\a^N=1$.
For any density $f$, define the {\it Dirichlet form} with respect to $\nu_\a^N$ by
\begin{align*}
D_{N,0}(f;\nu_a^N)\;=\; \dfrac{1}{2}\sum_{x=1}^{N-2}\int_{\Om_N}
\left[ \sqrt{f(\eta^{x,x+1})} - \sqrt{f(\eta)} \right]^2 d\nu_\a^N\;.
\end{align*}
It has been established in the proof of \cite[Lemma 5.9]{bmns17} that
there exists a constant $C_{\a,\b}>0$ such that
\begin{align}\label{21}
\lan L_N\sqrt f, \sqrt f\ran_\a\;\le\; -D_{N,0}(f;\nu_\a^N)+ \dfrac{C_{\a,\b}}{N^\theta}\;,
\end{align}
for any density $f$, where $\lan \cdot, \cdot \ran_\a$ stands for the $L^2$-inner product with respect to $\nu_\a^N$.
Since the actual value of the density of the reference measure is not important, we always fix it to be $\a$.

From the observation examined after the proof of Proposition \ref{12},
introduce the function $V=V_N$ given by $V(\eta)=\eta(1)+\eta(N-1) - 2G(\eta)$,
where $G$ has been introduced in the proof of Proposition \ref{12}.

We are ready to prove the replacement lemma.

\begin{lmm}[Replacement lemma]\label{04}
For any $t\ge0$, we have
\begin{align}\label{03}
\lim_{N\to\infty} \E^N \left[  \left | \int_0^t V(\eta_s^N) ds  \right| \right] \;=\; 0 \;,
\end{align}
where $\E^N$ stands for expectation with respect to the process $\eta_\cdot^N$.
\end{lmm}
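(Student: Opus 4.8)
The plan is to estimate the time-integral of $V$ by the standard entropy-and-Dirichlet-form machinery, exploiting the fact that in the time scale $N^{1+\theta}$ the process has enough time to locally equilibrate. First I would reduce the expectation in \eqref{03} to a static estimate. By stationarity of $\mu_N^{ss}$ under $L_N$ (hence under $N^{1+\theta}L_N$), the law of $\eta_s^N$ is $\mu_N^{ss}$ for every $s$, so $\E^N[|\int_0^t V(\eta_s^N)ds|]\le t\,\E^N[|V(\eta_0^N)|]$ is \emph{not} directly useful because $V$ need not have small $\mu_N^{ss}$-average pointwise. Instead I would apply the general variational bound: for any density $f$ with respect to $\nu_\a^N$ and any $\gamma>0$,
\begin{align*}
\E^N\left[\left|\int_0^t V(\eta_s^N)\,ds\right|\right]
\;\le\; \frac{H_N(\mu_N^{ss}|\nu_\a^N)}{\gamma N^{1+\theta}}
\;+\; \frac{t}{\gamma N^{1+\theta}}\sup_{f}\left\{\gamma N^{1+\theta}\!\int_{\Om_N}\! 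V f\,d\nu_\a^N + N^{1+\theta}\lan L_N\sqrt f,\sqrt f\ran_\a\right\}.
\end{align*}
Here I use the entropy inequality together with the Feynman--Kac / Rayleigh estimate that bounds the exponential time-integral of $V$ in terms of the largest eigenvalue of $N^{1+\theta}(L_N + \gamma V)$, and the fact that the leading eigenvalue is controlled by the supremum over densities of the associated quadratic form. The entropy term is handled immediately by \eqref{20}, giving a contribution of order $H_N/(\gamma N^{1+\theta})\le C_0 N/(\gamma N^{1+\theta})=C_0/(\gamma N^{\theta})\to 0$.

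The heart of the matter is the second term, i.e. bounding
\begin{align*}
\sup_{f}\left\{\int_{\Om_N} V f\,d\nu_\a^N + \frac{1}{\gamma}\lan L_N\sqrt f,\sqrt f\ran_\a\right\}
\end{align*}
uniformly in $N$ (after the $\gamma N^{1+\theta}$ cancellation). Using \eqref{21}, I can replace $\lan L_N\sqrt f,\sqrt f\ran_\a$ by $-D_{N,0}(f;\nu_\a^N)+C_{\a,\b}N^{-\theta}$, so the problem becomes to show that
\begin{align*}
\int_{\Om_N} V f\,d\nu_\a^N \;\le\; \frac{1}{\gamma}D_{N,0}(f;\nu_\a^N) + o(1)
\end{align*}
for a suitable choice of $\gamma$. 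The key observation is that $V(\eta)=\eta(1)+\eta(N-1)-2G(\eta)$ measures the discrepancy between the boundary occupations and the bulk average, and such a discrepancy is exactly what the bulk Dirichlet form penalizes. Concretely I would write $\eta(1)-G(\eta)$ as a telescoping sum of increments $\eta(x)-\eta(x+1)$ (and similarly for $\eta(N-1)-G(\eta)$), each of which is controlled by the corresponding bond term in $D_{N,0}$ via a one-block / two-block type estimate or, more elementarily, by a spectral-gap argument on a box: since the relaxation time of the symmetric exclusion process on a box of length $\ell$ is $O(\ell^2)$, one can transport mass from a boundary site to the bulk average at Dirichlet-form cost $O(\ell)$, and averaging over $\ell$ up to order $N$ yields the bound with the correct power of $N$.

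The main obstacle I expect is the quantitative matching of powers of $N$ in this last step: I must verify that the cost of replacing the two boundary occupation variables by the global average $G$ is genuinely of lower order than $\gamma N^{1+\theta}D_{N,0}$ allows, which is precisely where the choice of time scale $N^{1+\theta}$ (rather than the diffusive $N^2$) becomes essential. Taking $\gamma$ to be a small constant and tracking the telescoping estimate carefully, I expect the boundary-to-bulk transfer to cost a factor that is $O(1)$ after multiplication by the available $N^{1+\theta}$, so that the whole supremum is $o(1)$; the delicate point is that one cannot localize to a box of fixed size (as in the usual replacement lemma) because $V$ compares boundary sites to the \emph{global} average, so the estimate must be carried out on boxes whose size grows with $N$, and one must check that the resulting Dirichlet-form cost, summed or averaged over such boxes, remains compatible with the $N^{1+\theta}$ scaling. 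Once this static estimate is in hand, sending $N\to\infty$ (with $\gamma$ fixed) and then $\gamma\to\infty$ if needed completes the proof of \eqref{03}.
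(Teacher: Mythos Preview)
Your overall architecture (entropy inequality, Feynman--Kac/Rayleigh bound, then a moving-particle estimate controlling $\langle V,f\rangle_\alpha$ by the bulk Dirichlet form) is exactly the paper's, and your description of the telescoping argument is essentially Lemma~\ref{06}. The gap is quantitative: your choice of the entropy parameter $A=\gamma N^{1+\theta}$ is too large and destroys the proof.

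With $A=\gamma N^{1+\theta}$ the entropy term $H_N/A\le C_0/(\gamma N^{\theta})$ indeed vanishes, but after cancellation the variational problem becomes
\[
\sup_f\Big\{\langle V,f\rangle_\alpha+\tfrac{1}{\gamma}\lan L_N\sqrt f,\sqrt f\ran_\alpha\Big\}
\;\le\;\sup_f\Big\{\langle V,f\rangle_\alpha-\tfrac{1}{\gamma}D_{N,0}(f;\nu_\alpha^N)\Big\}+\tfrac{C_{\alpha,\beta}}{\gamma N^\theta},
\]
i.e.\ the Dirichlet form carries only a \emph{constant} coefficient $1/\gamma$. The telescoping/moving-particle bound you describe yields $\langle V,f\rangle_\alpha\le 4N^{1/2}D_{N,0}^{1/2}$ (this is Lemma~\ref{06}), and optimizing $4N^{1/2}D^{1/2}-D/\gamma$ over $D\ge 0$ gives $4\gamma N$, not $o(1)$. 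Even without optimizing, concrete densities such as $f(\eta)=\eta(1)/\alpha$ give $\langle V,f\rangle_\alpha\approx 1-\alpha$ with $D_{N,0}(f)\approx 1-\alpha$, so the supremum is bounded below by a positive constant for every fixed $\gamma>1$; for small $\gamma$ a perturbative choice $f=1+\epsilon(\eta(1)-\alpha)$ shows the supremum is $\gtrsim\gamma>0$. Either way it does not tend to $0$ as $N\to\infty$, and sending $\gamma\to\infty$ afterwards only makes things worse.

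The paper makes the opposite trade-off: it takes $A=\gamma N$, so the entropy term is merely $C_0/\gamma$ (this does \emph{not} vanish with $N$, and is removed only at the very end by letting $\gamma\to\infty$), while the Dirichlet form now enters with coefficient $N^{\theta}/\gamma$. Then Lemma~\ref{06} gives
\[
\sup_f\Big\{4N^{1/2}D_{N,0}^{1/2}-\tfrac{N^{\theta}}{\gamma}D_{N,0}\Big\}\;=\;4\gamma N^{1-\theta}\;\xrightarrow[N\to\infty]{}\;0
\]
because $\theta>1$, and one finishes by letting $\gamma\to\infty$. In short, replace $\gamma N^{1+\theta}$ by $\gamma N$ in your first displayed inequality and keep the rest; that is precisely the paper's proof.
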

\begin{proof}
For any $\ga>0$, from the entropy inequality and \eqref{20}, we have
\begin{align}\label{05}
 \E^N \left[  \left | \int_0^t V(\eta_s^N) ds  \right| \right] \;\le\;
 \dfrac{C_0}{\ga}+\dfrac{1}{\ga N}\log \E_\a \left[ \exp \left \{ \ga N\left| \int_0^t V(\eta_s^N) ds \right| \right\}\right]\;,
\end{align}
where $\E_\a$ stands for expectation with respect to the process starting from the product measure $\nu_\a^N$.
One can get rid of the absolute value in the right-hand side of \eqref{05} by the elementary inequality
$e^{|x|}\le e^x +e^{-x}$. 
Therefore, the estimate for the second term in the right-hand side of \eqref{05}
is reduced to the one without the absolute value.
Furthermore, from \cite[Lemma 7.3]{bmns17}, to conclude the proof, it is enough to
show that the following variational expression vanishes as $N\to\infty$ and $\ga\to\infty$:
\begin{align}\label{22}
\sup_{f}\left \{\ga^{-1}N^\theta\lan L_N\sqrt f, \sqrt f\ran_\a +\lan V, f\ran_\a \right\}\;,
\end{align}
where the supremum is carried over all densities $f$.

From \eqref{21} and Lemma \ref{06} below, the supremum \eqref{22} is bounded above by
\begin{align}\label{07}
\dfrac{C_{\a,\b}}{\ga}+\sup_f\left \{-\ga^{-1}N^\theta D_{N,0}(f;\nu_\a^N) + 4N^{1/2}D_{N,0}(f;\nu_\a^N)^{1/2}\right \}\;.
\end{align}
The previous supremum is easily computed and is bounded above by $4\ga N^{1-\theta}$.
Since $\theta$ is larger than $1$, the expression \eqref{07} vanishes as $N\to\infty$ and $\ga\to\infty$,
which completes the proof of Lemma \ref{04}.
\end{proof}

The following lemma in fact has been proved in the proof of \cite[Lemma 3.1]{lan92}.
However, we give the proof for reader's convenience.

\begin{lmm}[Moving particle lemma]\label{06}
For any density $f$, we have
\begin{align}\label{08}
\lan V, f \ran_\a\;\le\; 4N^{1/2}D_{N,0}(f;\nu_\a^N)^{1/2}\;.
\end{align}
\end{lmm}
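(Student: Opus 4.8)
The plan is to bound the inner product $\lan V, f\ran_\a$ by writing $V(\eta)=\eta(1)+\eta(N-1)-2G(\eta)$ in a ``telescoping'' form that exhibits it as a sum of nearest-neighbor differences, so that each term can be matched against a single bond term of the Dirichlet form $D_{N,0}(f;\nu_\a^N)$. The key identity is that the boundary occupations can be compared to the bulk average $G(\eta)=(N-1)^{-1}\sum_{x\in I_N}\eta(x)$ by moving a particle from a boundary site to each bulk site along a chain of exchanges. Concretely, I would write
\begin{align*}
\eta(1)-G(\eta)\;=\;\dfrac{1}{N-1}\sum_{y\in I_N}\left[\eta(1)-\eta(y)\right]\;,
\end{align*}
and similarly for $\eta(N-1)-G(\eta)$, so that $V$ becomes an average over $y$ of the differences $[\eta(1)-\eta(y)]+[\eta(N-1)-\eta(y)]$.

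Next I would decompose each difference $\eta(x)-\eta(y)$ into a telescoping sum of adjacent exchanges: $\eta(x)-\eta(y)=\sum_{z}[\eta(z)-\eta(z+1)]$ where $z$ ranges over the path of sites between $x$ and $y$. For a single bond $(z,z+1)$ the integral $\int [\eta(z)-\eta(z+1)] f\, d\nu_\a^N$ is controlled by the bond Dirichlet term. The standard estimate proceeds by writing
\begin{align*}
\int \left[\eta(z)-\eta(z+1)\right] f\, d\nu_\a^N
\;=\;\int \left[\eta(z)-\eta(z+1)\right]\left[\sqrt{f(\eta)}-\sqrt{f(\eta^{z,z+1})}\right]\sqrt{f(\eta)}\, d\nu_\a^N\;,
\end{align*}
using the $\nu_\a^N$-invariance of the exchange $\eta\mapsto\eta^{z,z+1}$ to symmetrize. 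Then Cauchy--Schwarz, together with the bound $|\eta(z)-\eta(z+1)|\le 1$ and $\int f\, d\nu_\a^N=1$, produces a factor of the square root of the single-bond piece of $D_{N,0}$.

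The main obstacle, and where I would spend the care, is the bookkeeping of how many times each bond $(z,z+1)$ is used across all the telescoping paths, since summing $\sqrt{\cdot}$ over bonds is not the same as $\sqrt{\sum\cdot}$. The cleanest route is to apply Cauchy--Schwarz in the summation over bonds so as to convert $\sum_{\text{bonds}}\sqrt{D_{\text{bond}}}$ into $(\#\text{bonds})^{1/2}\,(\sum_{\text{bonds}}D_{\text{bond}})^{1/2}$, and then count that each of the $O(N)$ bonds is traversed $O(N)$ times in the double average over $y$, while the overall normalization carries a factor $(N-1)^{-1}$. I would organize the counting so that the prefactors combine to $4N^{1/2}$ multiplying $D_{N,0}(f;\nu_\a^N)^{1/2}$; the constant $4$ reflects the two boundary contributions $\eta(1)$ and $\eta(N-1)$ together with the symmetrization factor. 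The scaling $N^{1/2}$ is exactly what one expects, since moving a particle across a distance of order $N$ through order-$N$ bonds costs $N\cdot N=N^2$ inside the square root after the Cauchy--Schwarz, which after the $(N-1)^{-1}$ normalization and the square root yields $N^{1/2}$.
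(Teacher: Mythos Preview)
Your approach is correct and yields the stated bound, but it takes a different route from the paper. The paper symmetrizes \emph{first} with the long-range exchange $\eta\mapsto\eta^{1,x}$: it writes
\[
\int[\eta(1)-\eta(x)]f\,d\nu_\a^N=\tfrac12\int[\eta(1)-\eta(x)]\bigl[\sqrt{f(\eta)}+\sqrt{f(\eta^{1,x})}\bigr]\bigl[\sqrt{f(\eta)}-\sqrt{f(\eta^{1,x})}\bigr]\,d\nu_\a^N,
\]
applies Cauchy--Schwarz once to extract $\bigl(\int[\sqrt{f(\eta)}-\sqrt{f(\eta^{1,x})}]^2\,d\nu_\a^N\bigr)^{1/2}$, and only \emph{then} decomposes the long-range transposition $\sigma^{1,x}$ as a product of $\sim 2x$ nearest-neighbor transpositions, bounding $\int[\sqrt{f}-\sqrt{f(\eta^{1,x})}]^2\,d\nu_\a^N\le 4N\,D_{N,0}(f;\nu_\a^N)$ uniformly in $x$. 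Averaging over $x$ is then trivial and gives $2N^{1/2}D_{N,0}^{1/2}$ per boundary, hence the factor $4$.

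You instead telescope the \emph{occupation variables} first, $\eta(1)-\eta(y)=\sum_z[\eta(z)-\eta(z+1)]$, and symmetrize bond by bond with the nearest-neighbor exchange $\eta^{z,z+1}$. This avoids ever introducing the long-range map $\eta^{1,x}$ and puts all the work into counting bond multiplicities in the double sum over $y$ and $z$; a second Cauchy--Schwarz over bonds then converts $\sum_z\sqrt{d_z}$ into $(N-2)^{1/2}(2D_{N,0})^{1/2}$. Both arguments are standard variants of the moving-particle technique and give the same $N^{1/2}$ scaling; the paper's version has cleaner bookkeeping (one uniform bound, no bond counting), while yours is more elementary in that it never leaves the nearest-neighbor Dirichlet structure and in fact can give a slightly smaller constant.
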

\begin{proof}
Fix a density $f$. The left-hand side in \eqref{08} can be written as
\begin{align}\label{09}
\dfrac{1}{N-1}\sum_{x\in I_N} \int_{\Om_N}\left\{ \left[\eta(1)-\eta(x)\right ] + \left[ \eta(N-1) -\eta(x) \right] \right\}f(\eta)\nu_\a^N(d\eta)\;.
\end{align}
In the following argument, we give an estimate for the sum involving $\eta(1)-\eta(x)$ only since the other sum is similar.

For each $x\in I_N$, by the change of variables $\eta\mapsto\eta^{1,x}$, 
the sum involving $\eta(1)-\eta(x)$ in \eqref{09} can be rewritten as
\begin{align*}
&\dfrac{1}{2(N-1)}\sum_{x\in I_N} \int_{\Om_N} \left[\eta(1)-\eta(x) \right]
\left[ f(\eta) - f(\eta^{1,x}) \right]\nu_\a^N(d\eta)\\
&= \dfrac{1}{2(N-1)}\sum_{x\in I_N} \int_{\Om_N} \left[\eta(1)-\eta(x) \right]
\left[\sqrt{f(\eta)} + \sqrt{f(\eta^{1,x})}\right] \left[\sqrt{f(\eta)} - \sqrt{f(\eta^{1,x})}\right]  \nu_\a^N(d\eta)\;.
\end{align*}
Since there is at most one particle per each site and $f$ is a density,
applying the Schwartz inequality, the last expression is bounded above by
\begin{align}\label{10}
\dfrac{1}{(N-1)}\sum_{x\in I_N}\left\{  \int_{\Om_N} 
\left[\sqrt{f(\eta)} - \sqrt{f(\eta^{1,x})}\right]^2  \nu_\a^N(d\eta) \right\}^{1/2}\;.
\end{align}

For each $x,y\in I_N$, consider the transformation $\sigma^{x,y}$ on $\Om_N$ defined by $\sigma^{x,y}\eta=\eta^{x,y}, \eta\in\Om_N$.
Clearly, $\sigma^{x,y}$ is $\nu_\a^N$-measure preserving and satisfies the relation
\begin{align*}
\sigma^{1,x}\;=\; \sigma^{1,2}\circ\sigma^{2,3}\circ\cdots\circ\sigma^{x-2,x-1}\circ\sigma^{x,x-1}\circ\sigma^{x-1,x-2}
\circ\cdots\circ\sigma^{3,2}\circ\sigma^{2,1}\;,
\end{align*}
for any $x\in I_N$, where the symbol $\circ$ stands for the composition of transformations.
By adding and subtracting the terms by this sequence into the brackets in \eqref{10}, from the Cauchy-Schwarz inequality,
we have
\begin{align*}
 \int_{\Om_N} \left[\sqrt{f(\eta)} - \sqrt{f(\eta^{1,x})}\right]^2  \nu_\a^N(d\eta)
 \;\le\; 4ND_{N,0}(f;\nu_\a^N)\;,
\end{align*}
which in turn implies the conclusion of Lemma \ref{06}. Note that the constant $4$ in \eqref{08}
comes from the contribution of the sum involving  $\eta(N-1)-\eta(x)$ in \eqref{09}. 
\end{proof}

We summarize the previous  computations as a single proposition,
which plays a fundamental role in the proof of Theorem \ref{11}.
However, as the proof follows from the formula \eqref{14} and Lemma \ref{04} easily, we omit the proof.

\begin{prp}\label{16}
Let $\A$ be the set of all trajectories $\{m_t: t\ge0\}$ in $D([0,T], \R)$ satisfying the integral equation \eqref{17}
with the initial value $m_0$ in $[0,1]$.
Then, any limit point $Q_*$ of the sequence $\{Q_N\}_{N\in\N}$ is concentrated on $\A$,
namely, $Q_* \left( \A \right)=1.$
\end{prp}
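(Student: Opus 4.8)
The plan is to show that any limit point $Q_*$ of $\{Q_N\}_{N\in\N}$ assigns full measure to the set $\A$ of trajectories solving the integral equation \eqref{17}. The strategy is entirely standard once the relative compactness (Proposition \ref{12}) and the replacement lemma (Lemma \ref{04}) are in hand. First I would pass to a subsequence along which $Q_N$ converges weakly to $Q_*$. By the Skorokhod representation theorem, one may realize these trajectories on a common probability space so that $m_\cdot^N \to m_\cdot$ almost surely in the Skorokhod topology; since the limit will turn out to be continuous, this upgrades to uniform convergence on $[0,T]$.

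The core of the argument is to control the functional that measures the defect from the integral equation. For a fixed trajectory $m_\cdot$ and each $t\in[0,T]$, define
\begin{align*}
\Phi_t(m_\cdot)\;=\; m_t - m_0 - c\int_0^t\left[\a+\b-2m_s\right]ds\;.
\end{align*}
The goal is to prove $\E_{Q_*}\left[\,\sup_{t\le T}|\Phi_t(m_\cdot)|\,\right]=0$, which forces $Q_*(\A)=1$. Starting from the martingale decomposition \eqref{14}, one has, for the microscopic trajectory,
\begin{align*}
m_t^N - m_0^N - c\int_0^t\left[\a+\b-2m_s^N\right]ds
\;=\; M_t^N + c\int_0^t V(\eta_s^N)\,ds + O(N^{-1})\;,
\end{align*}
where the identity $\a+\b-\eta_s^N(1)-\eta_s^N(N-1) = (\a+\b-2m_s^N) + V(\eta_s^N)$ is used to introduce $V$. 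The martingale term vanishes in $L^2$ by the quadratic variation estimate \eqref{30}, and the time integral of $V$ vanishes in $L^1$ by Lemma \ref{04}; the $O(N^{-1})$ term is negligible. Hence the microscopic defect tends to zero, and by the (uniform) convergence $m_\cdot^N\to m_\cdot$ together with continuity of the map $m_\cdot \mapsto \Phi_t(m_\cdot)$, the limiting defect is zero for each $t$.

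The main obstacle is the interchange of limits: Lemma \ref{04} is stated for a fixed time $t$ and controls $\E^N[|\int_0^t V\,ds|]$, whereas $\A$ requires the integral equation to hold simultaneously for all $t\in[0,T]$. I would handle this by first establishing $\Phi_t=0$, $Q_*$-almost surely, for each fixed $t$ in a countable dense subset of $[0,T]$, and then invoking the continuity of $t\mapsto m_t$ and of $t\mapsto\int_0^t(\a+\b-2m_s)ds$ to extend the identity to all $t\in[0,T]$ simultaneously. A minor technical point is passing the convergence through the map $m_\cdot\mapsto m_t$, which is continuous at a fixed $t$ only on trajectories continuous at $t$; since limit points are supported on continuous paths (the jumps of $m_\cdot^N$ are $O(N^{-1})$, as seen from $G$ changing by at most $(N-1)^{-1}$ per flip), this causes no difficulty. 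Combining these steps yields $Q_*(\A)=1$. Because the excerpt explicitly defers the proof, I would note that all the required estimates have been assembled and the verification is routine, precisely as the paper states.
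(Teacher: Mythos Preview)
Your proposal is correct and is precisely the argument the paper has in mind when it says the proof ``follows from the formula \eqref{14} and Lemma \ref{04} easily''; the only slip is a sign in your displayed identity (one should have $-V(\eta_s^N)$ rather than $+V(\eta_s^N)$, since $\eta(1)+\eta(N-1)=2G(\eta)+V(\eta)$), which is immaterial because Lemma \ref{04} controls the absolute value. Your handling of the fixed-$t$ versus all-$t$ issue via a countable dense set together with the continuity of limit trajectories is the standard way to close the argument.
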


The following lemma states that $(\a+\b)/2$ can be characterized
as a unique attractor of the integral equation \eqref{17}.

\begin{prp}\label{26}
The solution of the integral equation \eqref{17} is given by
\begin{align*}
m_t\;=\; \dfrac{\a+\b}{2} +\left(m_0 - \dfrac{\a+\b}{2} \right) e^{-2ct}\;.
\end{align*}
In particular, $m_t$ converges to $(\a+\b)/2$ as $t\to\infty$, uniformly in initial values in $[0,1]$.
\end{prp}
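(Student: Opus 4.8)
The plan is to recognize that the integral equation \eqref{17} is the integrated form of an elementary linear ordinary differential equation, to solve it explicitly, and to read off the uniform convergence from the resulting closed formula.

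First I would observe that any solution $\{m_t\}$ of \eqref{17} is continuous, hence the integrand $s\mapsto c[\a+\b-2m_s]$ is continuous, so $t\mapsto m_t$ is in fact continuously differentiable and solves the Cauchy problem
\begin{align*}
\dot m_t\;=\; c\left[\a+\b-2m_t\right]\;, \qquad m_t\big|_{t=0}\;=\;m_0\;.
\end{align*}
Conversely, any $C^1$ solution of this Cauchy problem satisfies \eqref{17} by the fundamental theorem of calculus, so the two formulations are equivalent.

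Next I would center the equation at its unique stationary point. Setting $n_t:=m_t-(\a+\b)/2$, the Cauchy problem becomes the homogeneous linear equation $\dot n_t=-2c\,n_t$ with $n_0=m_0-(\a+\b)/2$, whose unique solution is $n_t=n_0\,e^{-2ct}$. Uniqueness follows from Gronwall's inequality, or equivalently from the observation that the right-hand side $m\mapsto c[\a+\b-2m]$ is globally Lipschitz, so that Picard--Lindel\"of applies. Undoing the substitution yields
\begin{align*}
m_t\;=\;\dfrac{\a+\b}{2}+\left(m_0-\dfrac{\a+\b}{2}\right)e^{-2ct}\;,
\end{align*}
which is the asserted formula; alternatively one may simply verify directly that this function solves \eqref{17} and invoke uniqueness.

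Finally, for the convergence statement I would use the explicit bound
\begin{align*}
\left|m_t-\dfrac{\a+\b}{2}\right|\;=\;\left|m_0-\dfrac{\a+\b}{2}\right|e^{-2ct}\;\le\; e^{-2ct}\;,
\end{align*}
where the last inequality uses $m_0\in[0,1]$ and $(\a+\b)/2\in[0,1]$, so that $|m_0-(\a+\b)/2|\le 1$. Since the right-hand side tends to $0$ as $t\to\infty$ and is independent of $m_0$, the convergence of $m_t$ to $(\a+\b)/2$ holds uniformly in the initial value $m_0\in[0,1]$. There is no genuine obstacle in this argument; the only point deserving a moment's care is the uniformity, which is secured precisely by the initial-value-independent exponential bound above.
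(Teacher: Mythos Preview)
Your proof is correct and matches what the paper intends: the paper itself declares the proposition elementary and leaves the proof to the reader, and your argument---reducing \eqref{17} to the linear ODE $\dot m_t=c[\a+\b-2m_t]$, solving it explicitly, and reading off the uniform exponential bound---is exactly the routine verification one expects.
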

The proof of this proposition is elementary, and left to the reader.

We have now all the ingredients to prove Theorem \ref{11} for $\theta>1$.

\begin{proof}[Proof of Theorem \ref{11} for $\theta>1$]
Since $\M_+$ is compact, the sequence $\{\cP_N\}_{N\in\N}$ is relatively compact.
Let $\cP_*, Q_*$ be any limit point of the sequence $\{\cP_N\}_{N\in\N}, \{Q_N\}_{N\in\N}$, respectively.
Take a subsequence $N_k$, if necessary, so that the sequences $\{\cP_{N_k}\}_{k\in\N}, \{Q_{N_k}\}_{k\in\N}$
converge to $\cP_*, Q_*$, respectively.
Note that Proposition \ref{13} shows that $\cP_*(\cE_\theta)=1$.
Recall the definition of the function $\rho_N$: $\rho_N(u)=(\a+\b)/2, u\in[0,1]$.
To conclude the proof, it is enough to show that
\begin{align}\label{23}
\cP_*\left(\{\rho_N(u)du\}\right)\;=\;1\;.
\end{align}

Fix $\delta>0$. Denote by $O_{\delta}$ the subset of $[0,1]$ given by
\begin{align*}
O_{\delta}\;=\; \left[0,1\right]\setminus \left[\dfrac{\a+\b}{2}-\de, \dfrac{\a+\b}{2}+\de\right]\;,
\end{align*}
and by $\overline{O_\delta}$ the closure of $O_\delta$.
From the stationarity of $\mu_N^{ss}$, we have
\begin{align}\label{15}
\cP_N \left(\pi: \lan \pi, {\bf 1}\ran\in O_\delta\right)\;=\;\mu_N^{ss}\left(\eta: G(\eta)\in O_\delta\right)
\;=\; Q_N\left(m_\cdot: m_t\in O_\delta\right)\;,
\end{align}
for any $t\ge0$, where ${\bf 1}$ stands for the constant function ${\bf 1}(u)=1, u\in[0,1]$.

Since the application $\pi\mapsto\lan\pi, {\bf 1}\ran$ is continuous with respect to the weak topology,
and $\cP_{N_k}, Q_{N_k}$ converge to $\cP_*, Q_*$ weakly, respectively, we have
\begin{align*}
\cP_*\left(\pi: \lan\pi, {\bf 1}\ran \in O_{2\delta}\right) \;&\le\; \liminf_{k\to\infty}\cP_{N_k}\left(\pi: \lan\pi, {\bf 1}\ran \in O_{2\delta}\right)\\
\;&=\; \liminf_{k\to\infty}Q_{N_k}\left(m_\cdot: m_t \in O_{2\delta}\right)\\
\;&\le\; \limsup_{k\to\infty}Q_{N_k}\left(m_\cdot: m_t\in \overline{O_\delta}\right)\\
\;&\le\; Q_*\left(m_\cdot: m_t\in \overline{O_\delta}\right)\;.
\end{align*}
We used \eqref{15} to obtain the second equality and the monotonicity of $Q_{N_k}$ the third inequality.
For the last inequality, one should pay an attention since the application $m_{\cdot}\mapsto m_t$ is not continuous
with respect to the Skorokhod topology. However, one can justify this inequality by the fact that $Q_*$
is concentrated on continuous trajectories, see the proof of \cite[Theorem 2.2]{lt18} for a similar argument.
Since $\overline{O_\delta}$ does not contain $(\a+\b)/2$,
it follows from Propositions \ref{16}, \ref{26} that $Q_*\left(m_\cdot: m_t\in \overline{O_\delta}\right)$ vanishes
if $t$ is larger than $-(2c)^{-1}\log\de$.
Thus, \eqref{23} has been shown. This completes the proof of Theorem \ref{11} for $\theta>1$.
\end{proof}

\section*{Acknowledgements}
The author would like to thank Professor Tomoyuki Shirai for 
giving him an opportunity to attend the international conference
{\it Stochastic dynamics out of equilibrium} held in Institut Henri Poincar\'e from June 12--16, 2017.
Most of the proof presented in Section \ref{sec4} has been given during this visit.
He is also grateful to the anonymous referee for the comments
that have been helpful in revising the paper.


\begin{thebibliography}{99}

\bibitem{bmns17} R. Baldasso, O. Menezes, A. Neumann and R. R. Souza: 
Exclusion process with slow boundary.
J. Stat. Phys. {\bf 167}, 1112--1142 (2017).

\bibitem{bdgjl15} L. Bertini, A. De Sole, D. Gabrielli, G. Jona-Lasinio and C. Landim:
Macroscopic fluctuation theory.
Rev. Modern Phys. {\bf 87}, 593--636 (2015).

\bibitem{flm11} J. Farfan, C. Landim and M. Mourragui: 
Hydrostatics and dynamical large deviations of boundary gradient symmetric exclusion processes.
Stochastic Process. Appl. {\bf 121}, 725--758 (2011).

\bibitem{efgnt18} D. Erhard, T. Franco, P. Gon\c{c}alves, A. Neumann and M. Tavares: 
Non-equilibrium fluctuations for the SSEP with a slow bond.
To appear in Ann. Inst. H. Poincar\'e Probab. Statist.

\bibitem{fgn18} T. Franco, P. Gon\c{c}alves  and A. Neumann: 
Non-equilibrium and stationary fluctuations of a slowed boundary symmetric exclusion.
online first, Stochastic Process. Appl. (2018).

\bibitem{fn17} T. Franco and A. Neumann: 
Large deviations for the exclusion process with a slow bond.
Ann. Appl. Probab. {\bf 27}, 3547--3587 (2017).

\bibitem{gjnm18} P. Gon\c{c}alves, M. Jara, A. Neumann and O. Menezes: 
Non-equilibrium and stationary fluctuations for the SSEP with slow boundary.
http://arxiv.org/abs/1810.05015, 2018.

\bibitem{kl99} C. Kipnis, C. Landim: {\it Scaling Limits of Interacting
    Particle Systems}. Grundlheren der mathematischen Wissenschaften
  {\bf 320}, Springer-Verlag, Berlin, New York, 1999.
  
\bibitem{lan92} C. Landim: 
Occupation time large deviations for the symmetric simple exclusion process.
Ann. Probab. {\bf 20}, 206--231 (1992).

\bibitem{lt18} C. Landim, K. Tsunoda: 
Hydrostatics and dynamical large deviations for a reaction-diffusion model.
Ann. Inst. Henri Poincar\'e Probab. Stat. {\bf 54}, 51--74 (2018).

\end{thebibliography}
\end{document}